\providecommand{\U}[1]{\protect\rule{.1in}{.1in}}
\newtheorem{theorem}{Theorem}[section]
\newtheorem{corollary}[theorem]{Corollary}
\newtheorem{proposition}[theorem]{Proposition}
\newtheorem{lemma}[theorem]{Lemma}
\theoremstyle{definition}
\newtheorem{definition}[theorem]{Definition}
\newtheorem{example}[theorem]{Example}
\theoremstyle{remark}
\numberwithin{equation}{section}
\begin{document}
\title{On Defining AW*-algebras and Rickart C*-algebras}
\author{Kazuyuki SAIT\^{O}}
\address{2-7-5 Yoshinari, Aoba-ku, Sendai, 989-3205, Japan }
\email{yk.saito@beige.plala.or.jp}
\author{J.D. Maitland WRIGHT}
\address{Mathematics Institute, University of Aberdeen, Aberdeen AB24 3UE }
\curraddr{Christ Church, University of Oxford, Oxford 0X1 1DP}
\email{j.d.m.wright@abdn.ac.uk ; maitland.wright@chch.ox.ac.uk}
\thanks{}
\thanks{}
\subjclass[2010]{Primary46L99,37B99}
\date{}
\dedicatory{ }
\begin{abstract}
Let $A$ be a C*-algebra. It is shown that $A$ is an AW*-algebra if, and only
if, each maximal abelian self--adjoint subalgebra of $A$ is monotone complete.
An analogous result is proved for Rickart C*-algebras; a C*-algebra is a
Rickart C*-algebra if, and only if, it is unital and each maximal abelian
self--adjoint subalgebra of $A$ is monotone $\sigma-$complete.

\end{abstract}
\maketitle





\section{\textbf{AW*-algebras}}

In this note $A$ will be a C*-algebra which is assumed to have a unit element
(unless we state otherwise). Let $ProjA$ be the set of all projections in $A$.
Let $A_{sa}$ be the self-adjoint part of $A$. We recall that the positive cone
$A^{+}=\{zz^{\ast}:z\in A\}$ induces a partial ordering on $A$. Since each
projection is in $A^{+}$, it follows that the partial ordering of $A_{sa}$
induces a partial ordering on $ProjA$.

Let us recall that a C*-algebra $B$ is \textit{monotone complete} if each norm
bounded, upward directed set in $B_{sa}$ has a supremum in $B_{sa}$. Then, by
considering approximate units, it can be shown that $B$ always has a unit
element. (Another possible definition is: each upper bounded, upward directed
set in $B_{sa}$ has a supremum in $B_{sa}$. For unital algebras these are
equivalent but for non-unital algebras they are not the same.)

Kaplansky introduced AW*-algebras as an algebraic generalisation of von
Neumann algebras \cite{K}.

\begin{definition}
The algebra $A$ is an \textit{AW*-algebra} if (i) each maximal abelian
self--adjoint subalgebra is (norm) generated by its projections and (ii) each
family of orthogonal projections has a least upper bound in $ProjA$.
\end{definition}

When $A$ is an AW*-algebra it can be proved that each maximal abelian $\ast
-$subalgebra of $A$ is monotone complete and $A$ is unital.

It has been asserted by Wright \cite{Wr3} and Pedersen \cite{Ped} that,
conversely, if each m.a.s.a. in $A$ is monotone complete then $A$ is an
AW$^{\text{*}}$-algebra. It was recently pointed out to one of us that no
proof of this statement has ever been published; furthermore a straightforward
approach does not work. Also some have expressed doubt as to the truth of this
assertion. So in this note we repair this omission. By taking the "correct"
definition of monotone complete we can get rid of the assumption that $A$ has
a unit.

Recent work by Hamhalter \cite{Hamh2}, Heunen and others, see \cite{He, He-R3,
L} investigate to what extent the abelian *-subalgebras of a C*-algebra
determine its structure. Also a number of interesting new results on
AW*-algebras have been discovered; for example Hamhalter \cite{Hamh1}; Heunen
and Reyes \cite{He-R1} and \cite{He-R2}. So this seems a good moment to
justify the assertion. But we should have written this up many years ago. We
can only plead, "The carelessness of youth is followed by the regrets of old age".

The following result is elementary. But since it clarifies the partial
ordering of $Proj(A)$, we include a proof.

\begin{lemma}
Let $p$ and $q$ be projections. Then $p\leq q$ if and only if $p=qp$.
Furthermore $p\leq q$ implies that $p$ and $q$ commute.
\end{lemma}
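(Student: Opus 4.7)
The plan is to handle the two directions separately and then read off commutativity as an almost immediate consequence.

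For the forward direction, assume $p \leq q$. The key trick I would use is to sandwich $p$ between $(1-q)$'s and exploit positivity. Since $(1-q)$ is self-adjoint, the operation $x \mapsto (1-q)x(1-q)$ preserves the order on $A_{sa}$, so $0 \leq (1-q)p(1-q) \leq (1-q)q(1-q)$. The upper bound collapses to $0$ because $(1-q)q = 0$. Now since $p$ is a projection we have $p = p \cdot p$, so $(1-q)p(1-q)$ factors as $((1-q)p)((1-q)p)^{*}$. A positive element of the form $yy^{*}$ which equals zero forces $y = 0$, giving $(1-q)p = 0$, i.e.\ $p = qp$.

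For the converse, assume $p = qp$. Taking adjoints gives $p = pq$ as well, so $pq = qp = p$. Then $q - p$ is self-adjoint, and a direct expansion of $(q-p)^{2}$ using $p^{2} = p$, $q^{2} = q$, and $pq = qp = p$ collapses to $q - p$. Hence $q - p$ is itself a projection, in particular positive, so $p \leq q$.

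The final assertion about commutativity requires no extra work: once we know $p \leq q$ forces $p = qp$, taking adjoints gives $p = p^{*} = (qp)^{*} = pq$, so $p$ and $q$ commute.

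I do not anticipate a genuine obstacle here; the only subtlety is the first direction, where one must spot the sandwich $(1-q)p(1-q)$ and recognize it as $yy^{*}$ for $y = (1-q)p$, but this is a standard move in elementary C*-algebra.
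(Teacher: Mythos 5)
Your proof is correct and follows essentially the same route as the paper: the same sandwich $0\leq(1-q)p(1-q)\leq(1-q)q(1-q)=0$ combined with the C*-identity applied to $z=(1-q)p$ for the forward direction, and the same expansion of $(q-p)^{2}$ to exhibit $q-p$ as a projection for the converse. Nothing is missing.
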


\begin{proof}
Let $p\leq q$ then
\[
(1-q)p(1-q)\leq(1-q)q(1-q)=0.
\]
Put $z=(1-q)p$ and observe that $||z||^{2}=||zz^{\ast}||=0$. So $p=qp$. Since
$p$ is self-adjoint, $qp=(qp)^{\ast}=pq$. That is $p$ and $q$ commute.

Conversely, suppose $p=qp$. By self-adjointness, $qp=pq$.

We have $(q-p)^{2}=q-qp-pq+p=q-p$. Since $q-p$ is a projection, it is in
$A^{+}$. So $q\geq p$.
\end{proof}

\begin{lemma}
Let $A$ be a (unital) C*-algebra. Let every maximal abelian $\ast$-subalgebra
of $A$ be monotone complete. Let $P$ be a family of commuting projections. Let
$L$ be the set of all projections in $A$ which are lower bounds for $P$. Then
(i) $L$ is upward directed and (ii) $P$ has a greatest lower bound.
\end{lemma}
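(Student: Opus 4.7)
The plan is to construct a single projection $f$ as a candidate for the greatest lower bound of $P$, and then verify directly that it fulfills both (i) and (ii). By Zorn's lemma I would extend $C^*(P\cup\{1\})$, which is abelian because $P$ is a commuting family, to a maximal abelian $\ast$-subalgebra $M$ of $A$; by hypothesis, $M$ is monotone complete. The finite products $p_F:=\prod_{p\in F}p$, indexed by finite $F\subseteq P$, form a downward-directed net of projections in $M$, and I would set $f:=\inf_M\{p_F\}$. Since $M$ is commutative and monotone complete, it is isomorphic to $C(X)$ with $X$ extremally disconnected, and in such an algebra the infimum of a decreasing net of projections is again a projection; so $f$ is a projection, and clearly $f\leq p$ for each $p\in P$, placing $f\in L$.

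The heart of the proof is to show that $f$ is the greatest element of $L$. Given any $e\in L$, Lemma 1.2 shows $e$ commutes with every $p\in P$, so $\{e\}\cup P$ is a commuting family of projections. I would extend $C^*(\{e\}\cup P\cup\{1\})$ to a MASA $M^e$ of $A$, again monotone complete, and set $f^e:=\inf_{M^e}\{p_F\}$. Because $e$ and every $p_F$ lie in $M^e$ with $e\leq p_F$, and $f^e$ is the greatest lower bound in $M^e$, one gets $e\leq f^e$.

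The remaining and delicate step is to identify $f$ with $f^e$; once done, $e\leq f$ follows. I would approach this by showing $f^e\in M$ (and symmetrically $f\in M^e$), whereupon the universal property of the infimum in each MASA forces $f^e\leq f$ and $f\leq f^e$. To see $f^e\in M$, I must show that every $x\in M$ commutes with $f^e$: since $P\subseteq M$ is abelian, such $x$ commutes with every $p_F$, and the task is to lift this commutation to the monotone infimum $f^e$ inside the commutative monotone complete algebra $M^e$, using the order-continuity of multiplication by positive elements in $M^e$ (equivalently, the Stonean structure of its Gelfand spectrum). This commutation-lifting step is the main obstacle of the proof.

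Granted $f=f^e$, part (ii) is immediate: $f$ is the greatest element of $L$, hence the greatest lower bound of $P$ among projections. Part (i) then follows at once, since for any $e_1,e_2\in L$ the projection $f$ is a common upper bound in $L$, so $L$ is upward directed.
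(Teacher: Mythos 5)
There is a genuine gap, and it sits exactly where you yourself locate it: the identification $f=f^{e}$. The infimum $\inf_{M}\{p_{F}\}$ is defined only by its order property \emph{inside} $M$; a priori it depends on which m.a.s.a.\ $M\supseteq P$ you choose, and nothing in your argument rules this out. Your proposed fix --- show $f^{e}\in M$ by proving that every $x\in M$ commutes with $f^{e}$ --- cannot be carried out as described: such an $x$ need not lie in $M^{e}$, and the ``order-continuity of multiplication'' you invoke is a statement internal to the commutative monotone complete algebra $M^{e}$; it gives no control over how $f^{e}$, produced by an order-completion process inside $M^{e}$, multiplies against elements outside $M^{e}$. (That the infimum is independent of the m.a.s.a.\ is essentially equivalent to the lemma itself: it follows a posteriori from the existence of a greatest lower bound in $Proj(A)$ together with Lemma 1.6, but it is not available as an input.) So the ``delicate step'' is not a technical detail to be filled in later; it is the whole content of the lemma, and the proposal does not close it. This is precisely the sense in which, as the introduction warns, ``a straightforward approach does not work.''

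The paper avoids comparing infima across different m.a.s.a.'s altogether. For (i) it takes $p,q\in L$, puts $P\cup\{p+q\}$ inside one m.a.s.a.\ $M_{1}$, and lets $f$ be the supremum there of the increasing sequence $((\frac{p+q}{2})^{1/n})$, which is a projection. The inequalities $c\geq f$ for $c\in P$ follow from operator monotonicity inside $M_{1}$; the inequalities $f\geq p$ and $f\geq q$ --- which must hold in $A$, not merely in $M_{1}$ --- are extracted from the single concrete inequality $f\geq\frac{1}{2}p$ via the projection trick $(1-f)p(1-f)=0\Rightarrow(1-f)p=0$. For (ii) one shows every chain in $L$ has an upper bound in $L$ (again a supremum taken in a m.a.s.a.\ containing both the chain and $P$, so that each element of $P$ is automatically an upper bound there), applies Zorn's Lemma to get a maximal element of $L$, and uses the directedness from (i) to upgrade maximal to greatest. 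If you want to rescue your construction, you need a device of this kind for converting an order-theoretic extremum computed inside one m.a.s.a.\ into inequalities valid in all of $A$.
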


\begin{proof}
(i) Let $p$ and $q$ be in $L$. Then each $c\in P$ commutes with both $p$ and
$q$ and hence with $p+q$. So $P\cup\{p+q\}$ is a set of commuting elements.
This set is contained in a m.a.s.a. $M_{1}$. By spectral theory, $((\frac
{p+q}{2})^{1/n})(n=1,2...)$ is a monotone increasing sequence whose least
upper bound in $M_{1}$ is a projection $f$.

By operator monotonicity \cite{Ped} , for each positive integer $n$, and $a,b$
in $A_{sa}$, $0\leq a\leq b\leq1$ implies $a^{1/n}\leq b^{1/n}$.

For any $c$ in $P$, $c\geq p$ and $c\geq q$. So $c\geq\frac{p+q}{2}$.

So
\[
c=c^{1/n}\geq\left(  \frac{p+q}{2}\right)  ^{1/n}.
\]
Hence $c\geq f$. Thus $f\in L$.

Also%
\[
f\geq\frac{p+q}{2}\geq\frac{1}{2}p.
\]
So%
\[
0=(1-f)f((1-f)\geq\frac{1}{2}(1-f)p(1-f)\geq0.
\]
Using $||zz^{\ast}||=||z||^{2}$ we find that $0=(1-f)p$. Thus $f\geq p$.
Similarly, $f\geq q$. So $L$ is upward directed$.$

(ii) Let $C$ be an increasing chain in $L$. Then $C\cup P$ is a commuting
family of projections. This can be embedded in a m.a.s.a. $M_{2}$. Let \ $e$
be the least upper bound of $C$ in $M_{2}$. Clearly\textbf{ }$1\geq e\geq0$.

To see that $e$ is a projection we argue as follows. Since $e^{1/2}$ is an
upper bound for $C$, $e^{1/2}\geq e$. So, by spectral theory, $e\geq e^{2}.$
Since $e$ commutes with each element of $C$, by spectral theory, $e^{2}$ is
also an upper bound for $C$, so $e^{2}\geq e$. It follows that $e^{2}%
=e$\textbf{.}

For each $p\in P$, $p\geq e$. So $e\in L$. So every chain in $L$ is upper
bounded. So, by Zorn's Lemma, $L$ has a maximal element. Since $L$ is upward
directed, a maximal element is a greatest element. In other words, $P$ has a
greatest lower bound in $Proj(A)$.
\end{proof}

\begin{proposition}
Let $A$ be a (unital) C*-algebra. Let every maximal abelian
self-adjoint$\ \ast$-subalgebra of $A$ be monotone complete. Then $A$ is an AW*-algebra.
\end{proposition}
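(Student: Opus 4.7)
The plan is to verify, in turn, the two clauses of the definition of an AW*-algebra.  Throughout, fix a m.a.s.a.\ $M\subseteq A$; by hypothesis $M$ is monotone complete.

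For clause (i), I would show that every self-adjoint element of $M$ lies in the norm-closed linear span of the projections of $M$.  The key technical step is to produce, for each $a\in M$ with $0\leq a\leq 1$, a support projection inside $M$.  Since the scalar function $t\mapsto t^{1/n}$ is monotone increasing in $n$ on $[0,1]$, the sequence $(a^{1/n})$ is monotone increasing in $M$ and bounded above by $1$, so by monotone completeness it has a supremum $e\in M$.  One then repeats the bootstrap argument from the proof of Lemma~1.3(ii):  $e^{1/2}$ dominates every $a^{1/2n}$, so $e^{1/2}\geq e$, whence $e\geq e^{2}$; and $e^{2}$ dominates every $a^{2/n}$, so $e^{2}\geq e$.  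Thus $e$ is a projection, and it is the support projection of $a$.  Applied to $(a-\lambda)_{+}$ for each $\lambda\in(0,1)$, the same construction realises the spectral projection $\chi_{(\lambda,\infty)}(a)$ inside $M$.  A routine step-function approximation via the continuous functional calculus then exhibits $a$ as a norm limit of linear combinations of such spectral projections, giving (i).

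For clause (ii), let $\{e_{\alpha}\}$ be any family of pairwise orthogonal projections in $A$.  Orthogonal projections commute, so the complements $\{1-e_{\alpha}\}$ form a commuting family of projections.  Applying Lemma~1.3 to $\{1-e_{\alpha}\}$ yields a greatest lower bound $g\in Proj(A)$.  Then $1-g$ is a least upper bound for $\{e_{\alpha}\}$: first, $g\leq 1-e_{\alpha}$ gives $e_{\alpha}\leq 1-g$ for every $\alpha$; second, if $h\in Proj(A)$ satisfies $h\geq e_{\alpha}$ for every $\alpha$, then $1-h$ is a lower bound for $\{1-e_{\alpha}\}$, so $1-h\leq g$, whence $h\geq 1-g$.

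The main technical obstacle lies in clause (i): one must be confident that monotone limits of the form $\sup_{n}((a-\lambda)_{+})^{1/n}$ really do live in $M$ (which is exactly monotone completeness) and really are projections (which follows from the squaring bootstrap recorded above).  Once that is granted, spectral approximation finishes (i).  Clause (ii), by contrast, is essentially a one-line reduction to Lemma~1.3 via complementation, and I do not expect it to cause trouble.
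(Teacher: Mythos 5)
Your proof is correct and, for the substantive clause (ii), identical to the paper's: the authors likewise apply Lemma~1.3 to the commuting family $\{1-e_{\lambda}\}$ and pass to complements to obtain the least upper bound. Your explicit verification of clause (i) via support projections of $(a-\lambda)_{+}$ and step-function approximation is sound (note that the squaring steps in the bootstrap are legitimate only because everything lives in the abelian algebra $M$); the paper omits this clause entirely, silently relying on the standard fact that a monotone complete abelian C*-algebra is norm-generated by its projections.
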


\begin{proof}
Let $\{e_{\lambda}\}_{\lambda\in\Lambda}$ be a family of orthogonal
projections. Let $P=\{1-e_{\lambda}:\lambda\in\Lambda\}$. Since this is a
commuting family of projections, it has a greatest lower bound $f$ in
$Proj(A)$. Hence $1-f$ is the least upper bound of $\{e_{\lambda}%
\}_{\lambda\in\Lambda}$ in $Proj(A)$. Then, by Definition 1.1, $A$ is an AW*-algebra.
\end{proof}

\begin{theorem}
Let $A$ be a C*-algebra which is not assumed to be unital. Let each m.a.s.a.
be monotone complete. Then $A$ is a (unital) AW*-algebra.
\end{theorem}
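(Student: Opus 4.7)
The plan is to reduce to Proposition 1.4 by showing that the hypotheses force $A$ to be unital; once unitality is established, Proposition 1.4 immediately yields the AW*-conclusion.

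First I would pick any m.a.s.a. $M\subseteq A$ (existence by Zorn's Lemma applied to the poset of commutative $\ast$-subalgebras). By hypothesis $M$ is monotone complete, and the discussion preceding Definition 1.1 records that every monotone complete C*-algebra has a unit --- an approximate identity of $M$ is norm bounded and upward directed in $M_{sa}$, hence has a supremum that is easily checked to act as a unit. So $M$ has a unit $e$, necessarily a projection in $A$.

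The heart of the argument is to show that this $e$ is a two-sided unit for all of $A$. Since $A$ is not assumed unital I would work inside the unitization $A^{\sim}$ and set $f=1-e$, a projection in $A^{\sim}$ orthogonal to $e$. For any self-adjoint $x\in A$ the element
\[
fxf=x-ex-xe+exe
\]
lies in $A$ (every summand does) and is self-adjoint. Because $em=me=m$ for each $m\in M$, one has $fm=mf=0$, and consequently $m(fxf)=(fxf)m=0$ for every $m\in M$. Thus $fxf$ commutes with all of $M$, and by maximality it belongs to $M$; but then $fxf=e(fxf)=(ef)xf=0$. Applying this identity with $x=a^{\ast}a$ for an arbitrary $a\in A$ gives $(af)^{\ast}(af)=fa^{\ast}af=0$, so $af=0$, i.e., $ae=a$; taking adjoints, $ea=a$. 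Hence $e$ is a unit for $A$, and Proposition 1.4 finishes the proof.

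The main obstacle I expect is precisely that $A$ is not assumed unital, which makes the symbol $1-e$ seem illegal inside $A$. The resolution is to permit $f=1-e$ to live in the larger algebra $A^{\sim}$ while exploiting the fact that the quadratic expression $fxf$ always returns to $A$; maximality of $M$ then pins this element down as a member of $M$, and the orthogonality $ef=0$ collapses it to zero. A secondary bookkeeping point is knowing that a monotone complete C*-algebra is automatically unital, but this is already noted in the paper's preamble and can be taken for granted.
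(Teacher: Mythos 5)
Your proof is correct, but you establish unitality by a genuinely different route from the paper's. The paper associates to each self-adjoint $x$ a m.a.s.a. containing it, whose unit $p$ satisfies $px=x=xp$ and is absorbed by any larger projection; it then reruns the machinery of Lemma 1.3 (the $((p+q)/2)^{1/n}$ trick for upward directedness of $Proj(A)$, and Zorn's Lemma on chains) to produce a largest projection $e$ in $Proj(A)$, which therefore acts as the identity on every $x$. You instead fix a single m.a.s.a. $M$ with unit $e$ and show directly that $e$ is a global unit: for self-adjoint $x$ the element $fxf$ (with $f=1-e$ in the unitization) lies in $A$, is annihilated on both sides by $M$, hence lies in $M$ by maximality since it is self-adjoint, hence equals $e(fxf)=0$; taking $x=a^{\ast}a$ then kills $af$. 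Your route is shorter and more elementary: it avoids Zorn's Lemma and Lemma 1.3 entirely, and in fact proves the stronger statement that any C*-algebra possessing a m.a.s.a. with a unit is itself unital, monotone completeness being used only to guarantee that some m.a.s.a. has a unit. What the paper's route buys in exchange is the extra structural information that $Proj(A)$ is upward directed and has a largest element, obtained by recycling arguments it has already set up.
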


\begin{proof}
All we need to do is show that $A$ has a unit element. Then we can apply
Proposition 1.4.

Given any $x\in A_{sa}$, there is a m.a.s.a. $M$ which contains $x$. Then the
unit of $M$ is a projection $p$ such that $px=x=xp$. For any projection $q$,
with $p\leq q$, $qx=qpx=px=x$. Taking adjoints, $xq=x$.

Arguing as in Lemma 1.3(ii), $Proj(A)$ has a maximal element $e$. Arguing as
in Lemma 1.3(i), $Proj(A)$ is upward directed and so $e$ is a largest
projection. In particular, $p\leq e$. So $ex=x=xe$.
\end{proof}

No one has ever seen an AW*-algebra which is not monotone complete. Are all
AW*-algebras monotone complete? This is a difficult problem but Christensen
and Pedersen made an impressive attack. They showed that every properly
infinite AW*-algebra is monotone sequentially complete \cite{C-P}. In view of
Theorem 1.5, this problem could be reformulated as: if every m.a.s.a of a
C*-algebra $A$ is monotone complete is $A$ also monotone complete?

The following technical lemma will be needed later. It is usually applied
\ with $P=ProjB$ or with $P=B_{sa}$.

\begin{lemma}
Let $B$ be a unital C*-algebra and let $M$ be a m.a.s.a. in $B$. Let $P$ be a
subset of $B_{sa}$ such that $uPu^{\ast}=P$ whenever $u$ is a unitary in $B$.
Let $Q$ be a subset of $P\cap M_{sa}$ which has a least upper bound $q$ in
$P$. Then $q$ is in $M$.
\end{lemma}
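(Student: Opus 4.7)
The plan is to prove that $q$ lies in $M$ by showing that $q$ commutes with every element of $M$ and invoking the maximality of $M$.

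First I would fix a unitary $u \in M$ and observe that, because $Q \subseteq M_{sa}$ and $M$ is abelian, $u$ commutes with each element of $Q$; equivalently, $uxu^* = x$ for every $x \in Q$, so $uQu^* = Q$. Next I would note that conjugation by $u$, being an inner $*$-automorphism of $B$, preserves the order on $B_{sa}$; by hypothesis it maps $P$ onto $P$, so it restricts to an order automorphism of $P$. Consequently, applying $x \mapsto uxu^*$ to the statement ``$q$ is the least upper bound of $Q$ in $P$'' gives ``$uqu^*$ is the least upper bound of $uQu^* = Q$ in $P$''. By uniqueness of suprema, $uqu^* = q$, i.e.\ $q$ commutes with $u$.

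Now I would use the standard fact that in a unital C*-algebra every self-adjoint element of norm at most one can be written as $\tfrac{1}{2}(v + v^{*})$ with $v$ unitary in the algebra; hence every element of $M$ is a finite linear combination of unitaries of $M$. Since $q$ commutes with every such unitary, it commutes with every element of $M$. Maximality of $M$ as an abelian self-adjoint subalgebra of $B$ then forces $q \in M$.

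The only real subtlety is the second step: one must make sure that the supremum being considered is the supremum \emph{in} $P$, not in $B_{sa}$, and that the order isomorphism induced by conjugation with $u$ genuinely carries suprema taken in $P$ to suprema taken in $P$. This is immediate from $uPu^{*} = P$ together with the fact that conjugation by a unitary is order preserving, so no delicate argument is required; the rest of the proof is routine.
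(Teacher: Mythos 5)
Your proposal is correct and follows essentially the same route as the paper: show $uqu^{\ast}=q$ for each unitary $u\in M$ (the paper unpacks your ``uniqueness of suprema'' step into the two inequalities $uqu^{\ast}\geq q$ and $u^{\ast}qu\geq q$), then use the fact that every element of $M$ is a linear combination of unitaries in $M$ and invoke maximality. No issues.
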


\begin{proof}
Let $u$ be any unitary in $M$. Then for any $x$ in $Q,$%
\[
uqu^{\ast}\geq uxu^{\ast}=x\text{.}%
\]
Then $uqu^{\ast}$ is in $P$ and is an upper bound for $Q$. So $uqu^{\ast}\geq
q$. Similarly $u^{\ast}qu\geq q$, that is $q\geq uqu^{\ast}$. Thus $uqu^{\ast
}=q$. So $q$ commutes with each unitary in $M$. But each element of $M$ is a
linear combination of at most four unitaries. So $q$ commutes with each
element of $M$. Hence, by maximality, $q\in M$.
\end{proof}

Let $A$ be an AW*-algebra and let $B$ be a C*-subalgebra of $A$ where $B$
contains the unit of $A$. Then $B$ is an \textit{AW*-subalgebra} of $A$ if
\ (i) $B$ is an AW*-algebra and (ii) whenever $\{e_{\lambda}:\lambda\in
\Lambda\}$ is a set of orthogonal projections in $B$ then its supremum in
$ProjB$ is the same as its supremum in $ProjA$. By Lemma 1 in \cite{S3}, or
see Exercise 27A of Section 4 page 27 and page 277 in\ \cite{B}, if $B$ is an
AW*-subalgebra of $A$ and $Q$ is an upward directed set in $ProjB$ then the
supremum of $Q$ in $ProjB$ is the same as it is in $ProjA$.

In any C*-algebra, each abelian C*-subalgebra is contained in a m.a.s.a.

\bigskip

\begin{proposition}
Let $A$ be an AW*-algebra. Let $B$ be a C*-subalgebra of $A$ where $B$
contains the unit of $A$. Suppose that whenever $N$ is a m.a.s.a. in $B$, $M$
is a m.a.s.a. in $A$ and $N\subset M$ then $N$ is monotone closed in $M$. Then
$B$ is an AW*-subalgebra of $A$. The converse is also true.
\end{proposition}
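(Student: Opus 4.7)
The two directions are handled separately.

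For the forward direction, I would verify the two clauses of ``AW*-subalgebra'' in turn, using Theorem 1.5 and Lemma 1.6. Given any m.a.s.a. $N$ of $B$, I embed $N$ in a m.a.s.a. $M$ of $A$; since $A$ is AW*, $M$ is monotone complete, and the monotone-closure hypothesis immediately makes $N$ monotone complete, so $B$ is AW* by Theorem 1.5. For the compatibility of suprema of an orthogonal family $\{e_\lambda\}_{\lambda \in \Lambda}$ of projections in $B$, I enlarge $\{e_\lambda\}$ to m.a.s.a.s $N \subset M$ as above. Lemma 1.6, applied once in $B$ (with $P = \mathrm{Proj}\, B$) and once in $A$ (with $P = \mathrm{Proj}\, A$), places the $B$-supremum $e^B$ in $N$ and the $A$-supremum $e^A$ in $M$, with $e^A \leq e^B$ trivial. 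The finite subsums $e_F = \sum_{\lambda \in F} e_\lambda$ form an increasing net in $N_{sa}$ whose supremum in $M$, by the Lemma 1.3(ii) argument, is a projection which equals $e^A$ (it is an upper bound in $\mathrm{Proj}\, A$ for $\{e_\lambda\}$, and $e^A$ is itself an upper bound for the $e_F$). The monotone-closure hypothesis now places $e^A \in N$, making $e^A$ an upper bound in $N$ for $\{e_F\}$, so $e^A \geq e^B$ and hence equality.

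For the converse, assume $B$ is an AW*-subalgebra of $A$, let $N \subset M$ be m.a.s.a.s in $B \subset A$, and take an upward-directed net $\{x_\alpha\} \subset N_{sa}$ with supremum $q \in M_{sa}$; rescale so that $0 \leq x_\alpha \leq 1$. My strategy is to show that every spectral projection of $q$ lies in $N$, and then recover $q \in N$ by norm approximation. For fixed $t > 0$, the spectral projections $e_\alpha^t := \chi_{(t, \infty)}(x_\alpha)$ lie in $N$ (a commutative AW*-algebra) and form a directed family in $\mathrm{Proj}\, B$; the cited result from \cite{S3} (equivalently, Berberian's Exercise 27A), valid because $B$ is an AW*-subalgebra, equates their suprema in $\mathrm{Proj}\, B$ and in $\mathrm{Proj}\, A$. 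Two applications of Lemma 1.6 locate this common projection $e^t$ in $N \cap M$.

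The main step, and the principal obstacle, is the spectral identity $e^t = \chi_{(t, \infty)}(q)$ in the commutative AW*-algebra $M$. One inequality is immediate from $x_\alpha \leq q$. For the other, I would assume $p := \chi_{(t, \infty)}(q) - e^t \neq 0$ and use the relation $\chi_{(t, \infty)}(q) = \sup_{\varepsilon > 0} \chi_{(t+\varepsilon, \infty)}(q)$ in $M$ to choose $\varepsilon > 0$ with $p' := p \wedge \chi_{(t+\varepsilon, \infty)}(q) \neq 0$. Spectral theory for $q$ gives $p' q \geq (t + \varepsilon) p'$, while $p' \leq 1 - e_\alpha^t$ for every $\alpha$ forces $p' x_\alpha \leq t p'$, so taking suprema yields $p' q \leq t p'$, a contradiction. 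With $\chi_{(t, \infty)}(q) \in N$ for all $t > 0$, the spectral resolution expresses $q$ as a norm limit of Riemann sums in these projections, so $q \in N$.
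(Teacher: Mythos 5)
Your forward direction is correct and is essentially the paper's own argument: the first clause is the same one-line deduction, and for the second clause your combination of Lemma 1.6 (applied in both $B$ and $A$) with the monotone-closure hypothesis applied to the increasing net of finite subsums reproduces, in slightly cleaner packaging, the paper's computation with a m.a.s.a. $M_{2}$ containing $N_{2}\cup\{p\}$. Your converse, however, takes a genuinely different route. The paper compares the two candidate suprema $a\in N$ and $b\in M$ and uses Lemma 1.1 of \cite{Wid} to produce an orthogonal family $(e_{\gamma})$ with supremum $1$ on which the convergence $a_{\alpha}\rightarrow a$ is almost uniform, deriving a contradiction from $a\neq b$; you instead try to show directly that every spectral projection $\chi_{(t,\infty)}(q)$ of the $M$-supremum lies in $N$. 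Your $\varepsilon$-refinement (the step you rightly call the principal obstacle) is sound, as is the Riemann-sum recovery of $q$ at the end.

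There is, though, a genuine gap at the sentence ``One inequality is immediate from $x_{\alpha}\leq q$.'' You form $e_{\alpha}^{t}=\chi_{(t,\infty)}(x_{\alpha})$ inside $N$, whereas $\chi_{(t,\infty)}(q)$ is formed inside $M$. The monotonicity $x\leq y\Rightarrow\chi_{(t,\infty)}(x)\leq\chi_{(t,\infty)}(y)$ is valid only when both spectral projections are computed in the \emph{same} monotone complete abelian algebra: a spectral projection is a supremum, and a supremum taken in the smaller algebra $N$ is a priori \emph{larger} than the corresponding supremum taken in $M$. Since the entire proposition is about when $N$-suprema agree with $M$-suprema, this step cannot be waved through; without it, $e^{t}\leq\chi_{(t,\infty)}(q)$ is unproved, and then $p=\chi_{(t,\infty)}(q)-e^{t}$ need not even be a projection. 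The gap is fillable with tools you already invoke: $\chi_{(t,\infty)}(x_{\alpha})$ is the range projection of $w=(x_{\alpha}-t)_{+}$, and the range projections of $w$ computed in $B$ and in $A$ coincide because the directed family $\chi_{(1/k,\infty)}(w)$ (formed in $N$) has the same supremum in $ProjB$ as in $ProjA$ by \cite{S3}, while each member is dominated by $kw$ and hence by the $A$-range projection. But this identification is precisely where the AW*-subalgebra hypothesis must enter your ``immediate'' inequality, and as written the proof omits it.
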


\begin{proof}
Let $N_{1}$ be a m.a.s.a. in $B$ then it is a subalgebra of some m.a.s.a.
$M_{1}$ of $A$. Then $M_{1}$ is monotone complete because $A$ is an
AW*-algebra. By hypothesis $N_{1}$ is a monotone closed subalgebra of $M_{1}$.
So $N_{1\text{ }}$is monotone complete. Hence $B$ is an AW*-algebra.

Let $C$ be a set of commuting projections in $B$ such that $C$ is upward
directed. Let $p$ be the supremum of $C$ in $ProjA$.

Let $N_{2}$ be a m.a.s.a. of $B$ which contains $C$. Let $u$ be any unitary in
$N_{2}$. Then, for any $c\in C$,
\[
upu^{\ast}\geq c.
\]

So the projection $upu^{\ast}$ is an upper bound for $C$ in $ProjA$. Thus
$upu^{\ast}\geq p$. On replacing $u$ by $u^{\ast}$, we find that $u^{\ast
}pu\geq p$. So $p\geq upu^{\ast}$. Thus $p=upu^{\ast}$. So $pu=up$. Since each
element of $N_{2}$ is the linear combination of four unitaries in $N_{2}$, it
follows that $p$ commutes with each element of $N_{2}$. So $N_{2}\cup\{p\}$ is
contained in a m.a.s.a. $M_{2}$ of $A$.

Let $q$ be the supremum of $C$ in $M_{2}$. By spectral theory, $q$ is a
projection. Since $p$ is the supremum of $C$ in $ProjA$,\ $q\geq p$. But $p\in
M_{2}$. So $q=p$. By hypothesis $N_{2}$ is a monotone closed subalgebra of
$M_{2}$. So $p\in N_{2}\subset B$. So $p$ is the supremum of $C$ in $ProjB$.

Now take $\{e_{\lambda}:\lambda\in\Lambda\}$ to be a set of orthogonal
projections in $B$. Let $C=\{%
{\textstyle\sum\nolimits_{\lambda\in F}}
e_{\lambda}:F$ a finite, non-empty subset of $\Lambda\}$. It follows from the
argument above that $B$ is an AW*-subalgebra of $A$

Conversely suppose that $B$ is an AW*-subalgebra of $A$. Take any m.a.s.a $N$
in $B$ and any m.a.s.a $M$ in $A$ with $N\subset M$. We shall show that $N$ is
monotone closed in $M$.

Let $(a_{\alpha})$ be any norm bounded increasing net in $N_{sa}$ such that
$a_{\alpha}\uparrow b$ in $M_{sa}$. We shall show $b\in N$. Suppose that
$\Vert a_{\alpha}\Vert\leq k$ for all $\alpha$. Since $N$ is monotone
complete, there exists $a\in N_{sa}$ such that $a_{\alpha}\uparrow a$ in
$N_{sa}$. Clearly $b\leq a$. Suppose that $a-b\neq0$. By spectral theory,
there exist a non-zero projection $p$ in $M$ and a positive real number
$\varepsilon$ such that $\varepsilon p\leq(a-b)p$. Since $a_{\alpha}\uparrow
a$ in $N_{sa}$, by Lemma 1.1 in \cite{Wid}, there exists an orthogonal family
$(e_{\gamma})$ of projections in $N$ with $\sup_{\gamma}e_{\gamma}=1$ in
$Proj(N)$ and a family $\{\alpha(\gamma)\}$ such that $\Vert(a-a_{\alpha
})e_{\gamma}\Vert\leq\frac{\varepsilon}{4}$ for all $\alpha\geq\alpha(\gamma)$
for each $\gamma$.

Since $B$ is AW*, $ProjB$ is a complete lattice. So $(e_{\gamma})$ has a least
upper bound $e$ in $ProjB$. By Lemma 1.6, $e\in N$.\ But $\sup_{\gamma
}e_{\gamma}=1$ in$\ Proj(N)$. So $e=1$. Thus $\sup_{\gamma}e_{\gamma}=1$ in
$ProjB$. Since $B$ is an AW*-subalgebra of $A$, it follows that $\sup_{\gamma
}e_{\gamma}=1$ in $ProjA$.

Then
\[
\varepsilon p\leq(a-b)p\leq(a-a_{\alpha(\gamma)})e_{\gamma}p+(a-a_{\alpha
(\gamma)})p(1-e_{\gamma})\leq\frac{\varepsilon}{4}p+2k(1-e_{\gamma}),
\]
that is, $\varepsilon p\leq\frac{\varepsilon}{4}p+2k(1-e_{\gamma})$ for all
$\gamma$. So%
\[
pe_{\gamma}\leq0.
\]
Thus $1-p\geq e_{\gamma}$ for all $\gamma$. So, in $ProjA$, $1-p\geq1$. Thus
$p=0$. This is a contradiction. So $b=a\in N$. \bigskip
\end{proof}

\section{\textbf{Rickart C*-algebras}}

For the purposes of this note a C*-algebra $B$ is monotone $\sigma-$complete
if each norm bounded, monotone increasing sequence in $B_{sa}$ has a supremum
in $B_{sa}$. In general $B$ need not be unital.

Rickart C*-algebras are related to monotone $\sigma-$complete algebras in a
similar way to that of \ AW*-algebras to\ monotone complete algebras. In
particular every unital monotone $\sigma-$complete algebra is well known to be
a Rickart C*-algebra; see Corollary 2.6. The converse is suspected to be true
but this is a hard problem. However \ Christensen and Pedersen \cite{C-P}
showed this to be true for properly infinite\textbf{ }Rickart C*-algebras. Ara
and Goldstein \cite{A-G2} showed that \textit{all }Rickart C*-algebras are
$\sigma-$normal which seems a significant step on the way to showing they are
monotone $\sigma-$complete. See also \cite{S2}. Other important results on
Rickart C*-algebras can be found in \cite{A} and \cite{AG-1}; \cite{Han1}{;
\cite{Han2}; \cite{G-H-L}\textit{. }In \cite{Han2}, Handelman makes use of
embedding in regular $\sigma-$completions \cite{Wr1}. We remark that normal
AW*-algebras were investigated in \cite{Wr2}, \cite{S-Wr1}, \cite{Hama} and
\cite{S1}. }

Let $A$ be a unital C*-algebra such that each m.a.s.a. is monotone $\sigma
-$complete. Call such an algebra \textit{pseudo-Rickart}. In \cite{S-Wr2} we
obtained a result for such algebras and, without a shred of justification,
called them "Rickart". So a natural question is : is every pseudo-Rickart
C*-algebra also a Rickart C*-algebra? On the one hand, some have stated that a
positive answer would be useful for applications to quantum theory
\cite{He-L-S}. On the other hand, others have expressed scepticism.

By modifying the techniques of Section 1, we shall show that the answer is positive.

\begin{definition}
A C*-algebra $B$ is \textit{Rickart} if, for each $a\in B$ there is a
projection $p$ such that%
\[
\{z\in B:az=0\}=pB.
\]

\end{definition}

\begin{lemma}
Each Rickart C*-algebra has a unit.
\end{lemma}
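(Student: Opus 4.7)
The plan is to extract a unit from the Rickart condition by applying it to the most degenerate element, namely $a=0$. Since the right annihilator $\{z\in B:0\cdot z=0\}$ is all of $B$, the Rickart property produces a projection $p\in B$ satisfying $pB=B$. This single identity will supply both a left and a right identity.

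First I would verify that $p$ is a left identity. Because $pB=B$, every $z\in B$ can be written as $z=pw$ for some $w\in B$, and then
\[
pz=p(pw)=p^{2}w=pw=z.
\]
So $pz=z$ for every $z\in B$.

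To upgrade this to a two-sided identity I would invoke the involution together with the self-adjointness of $p$. For any $y\in B$, apply the previous step to $z=y^{\ast}\in B$ to obtain $py^{\ast}=y^{\ast}$, and then take adjoints:
\[
yp=yp^{\ast}=(py^{\ast})^{\ast}=(y^{\ast})^{\ast}=y.
\]
Thus $yp=y$ for all $y\in B$, and combined with the previous paragraph $p$ is a two-sided unit of $B$.

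There is no real obstacle; the only small subtlety is recognising that one should feed the \emph{zero} element into the Rickart condition, rather than some nontrivial $a$, so that the right annihilator is forced to be the whole algebra. Everything else is a routine use of $p^{2}=p$ together with a single application of the $\ast$-operation to convert a one-sided identity into a two-sided one.
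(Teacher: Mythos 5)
Your proposal is correct and follows essentially the same route as the paper: apply the Rickart condition to $a=0$ to get $B=pB$, deduce that $p$ is a left identity from $p^{2}=p$, and pass to adjoints to make it two-sided. No differences worth noting.
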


\begin{proof}
In the definition put $a=0$. Then $B=pB$ for some projection $p$. So given any
$a\in B$, there exists $b$, such that $a=pb$. Since $p$ is a projection,
$pa=p^{2}b=pb=a$. Also $ap=(pa^{\ast})^{\ast}=a^{\ast\ast}=a$.
\end{proof}

\begin{lemma}
Let $B$ be a C*-algebra, which need not have a unit. Let $e\in Proj(B)$ and
$x\in B$. Then $x^{\ast}xe=0$ if, and only if, $xe=0$.
\end{lemma}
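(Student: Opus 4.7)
The ``if'' direction is immediate: if $xe=0$ then $x^{\ast}xe = x^{\ast}(xe) = 0$, and this uses nothing about $B$ beyond the ring structure.

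For the ``only if'' direction, my plan is to multiply on the left by $e$ and use the C*-identity $\|y^{\ast}y\| = \|y\|^2$. Explicitly, assuming $x^{\ast}xe = 0$, I would left-multiply by $e$ (using $e = e^{\ast}$ since $e$ is a projection) to obtain $ex^{\ast}xe = 0$, and then rewrite this as $(xe)^{\ast}(xe) = 0$. Applying the C*-identity with $y = xe$ gives $\|xe\|^2 = \|(xe)^{\ast}(xe)\| = 0$, and hence $xe = 0$.

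There is essentially no obstacle: the argument is the standard one used already in Lemma 1.2 and Lemma 1.3 to conclude that something vanishes from the vanishing of a $(1-q)p(1-q)$-type sandwich. The only point worth noting is that the absence of a unit in $B$ is harmless, since the whole computation takes place inside $B$ and relies only on the involution, multiplication, and the C*-norm identity. Consequently the lemma will be available in Section~2 in precisely the generality needed for Rickart C*-algebras (where a unit has not yet been produced at the point of application).
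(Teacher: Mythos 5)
Your argument is correct and coincides with the paper's own proof: both multiply on the left by $e$, recognise $ex^{\ast}xe=(xe)^{\ast}(xe)$, and invoke the C*-identity $\|y^{\ast}y\|=\|y\|^{2}$ to conclude $xe=0$, with the converse being immediate. Nothing further is needed.
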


\begin{proof}
If $x^{\ast}xe=0$ then $||ex^{\ast}xe||=0$. So $||xe||^{2}=0$. Hence $xe=0$.
The converse is obvious.
\end{proof}

\begin{lemma}
Let $A$ be a unital C*-algebra such that each m.a.s.a. is monotone $\sigma
-$complete. Let $x\in A$ and let%
\[
P=\{e\in Proj(A):xe=0\}.
\]
Then $P$ has a largest element.
\end{lemma}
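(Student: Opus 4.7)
The plan is to adapt the spectral template of Lemma 1.3 to the $\sigma$-complete setting, while accommodating the fact that $P$ is not a priori a commuting family. I begin by using Lemma 2.3 to pass to $y:=x^*x\ge 0$; the set $P=\{e\in\mathrm{Proj}(A):ye=0\}$ is unchanged, and since $y$ is self-adjoint every $e\in P$ also satisfies $ey=(ye)^*=0$, so each element of $P$ commutes with $y$. Next I fix a m.a.s.a. $M$ containing $y$ (monotone $\sigma$-complete by hypothesis), set $h_n:=y(y+1/n)^{-1}\in C^*(y)\subseteq M$, and define $s:=\sup_n h_n$ in $M$. A Lemma 1.3(ii)-style argument shows that $s$ is a projection, and the norm identity $h_n y\to y$ combined with the $M$-supremum yields $sy=y$. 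Putting $e_0:=1-s\in\mathrm{Proj}(M)$, we have $ye_0=0$, so $e_0\in P$; this is the candidate largest element.

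In parallel with Lemma 1.3(i) I would establish that $P$ is upward directed. Given $e,f\in P$, consider $a:=(e+f)/2\in A^+$. From $ye=yf=0$ one gets $ya=0$ and $ay=(ya)^*=0$, so $y$ and $a$ commute; embed them in a m.a.s.a. $M_1$ and set $g:=\sup_n a^{1/n}$ in $M_1$. The same projection argument shows $g\in\mathrm{Proj}(M_1)$. Operator monotonicity applied to $e/2\le a\le 1$ gives $2^{-1/n}e=(e/2)^{1/n}\le a^{1/n}\le g$, whence $e\le g$ as $n\to\infty$; symmetrically $f\le g$. From $(ya^{1/n})^n=y^n a\le\|y\|^{n-1}ya=0$ and positivity we get $ya^{1/n}=0$ for every $n$, and normality of multiplication in the commutative $\sigma$-complete algebra $M_1$ then gives $yg=0$. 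Hence $g\in P$ with $g\ge e,f$.

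The main obstacle is to prove that $e_0$ is actually the largest element of $P$, not merely of $P\cap M$. Given $e\in P$, the easy computation $h_n e=(y+1/n)^{-1}(ye)=0$ yields $h_n\le 1-e$ in $A$ for every $n$. However, $s$ was constructed as the supremum of $(h_n)$ only in $M$, whereas $1-e$ is an $A$-upper bound that need not lie in $M$; bridging this gap is the delicate step. My plan is to embed $\{y,e\}$ in a second m.a.s.a. $M_2$ and form $s_2:=\sup_n h_n$ in $M_2$; the analogous argument inside $M_2$, where now $1-e\in M_2$, delivers $e\le 1-s_2$ at once. It then suffices to identify $s=s_2$. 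For this I plan to appeal to Lemma 1.6, taking the unitarily invariant set of that lemma to be $A_{sa}$ and $Q:=\{h_n\}\subseteq M\cap M_2$: once one shows $(h_n)$ admits a least upper bound in $A_{sa}$, Lemma 1.6 forces that supremum to lie in every m.a.s.a. containing $y$, hence in both $M$ and $M_2$, so it must coincide with each of $s$ and $s_2$. Proving the existence of this $A_{sa}$-supremum --- equivalently, that the support projection of $y$ is intrinsically defined, independent of the ambient m.a.s.a. --- is the place where the assumption that \emph{every} m.a.s.a. (rather than just one) is monotone $\sigma$-complete enters essentially, and is the step I expect to require the most care.
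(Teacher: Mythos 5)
Your reduction to $y=x^{*}x$, the construction of the local support projection $s=\sup_{n}h_{n}$ inside a m.a.s.a., and the proof that $P$ is upward directed are all sound and close in spirit to the paper's argument. But the decisive step is exactly the one you flag and then leave open, and as planned it does not go through. You fix one m.a.s.a. $M\ni y$ at the outset and propose to show that $e_{0}=1-s$ dominates every $e\in P$ by comparing $s$ with the supremum $s_{2}$ of $(h_{n})$ computed in a second m.a.s.a. $M_{2}\supseteq\{y,e\}$, invoking Lemma 1.6 with $Q=\{h_{n}\}$ to force $s=s_{2}$. Lemma 1.6, however, only applies once $\{h_{n}\}$ is known to have a least upper bound in $A_{sa}$; the hypothesis of the lemma only provides suprema inside each m.a.s.a.\ separately, and there is no a priori reason these relative suprema are genuine suprema in $A_{sa}$, nor that they agree across different maximal abelian subalgebras. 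Establishing that the support projection of $y$ is intrinsically defined is essentially equivalent to the statement being proved (once the lemma is known, $1-f$ is the smallest projection $p$ with $yp=y$, and only then can one check that every local $s_{M}$ equals $1-f$), so your plan is circular at its core.

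The way out --- and the paper's route --- is never to compare support projections across different maximal abelian subalgebras. Having shown $P$ upward directed, apply Zorn's Lemma directly to $P$: given a chain $C$ in $P$, the set $C\cup\{y\}$ is a commuting family (comparable projections commute by Lemma 1.2, and $yc=0=cy$ for each $c\in C$), so it lies in a m.a.s.a. $M_{2}$ chosen \emph{after} the chain; the projection $p=\sup_{n}(y^{1/n})$ in $M_{2}$ satisfies $y(1-p)=0$ and $pc=0$ for every $c\in C$ because $C\subseteq M_{2}$, so $1-p$ is an upper bound for $C$ lying in $P$. Zorn then yields a maximal element of $P$, and upward directedness (which you did prove correctly) upgrades a maximal element to a largest one. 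If you prefer your resolvent functions, the same Zorn argument works verbatim with $\sup_{n}h_{n}$ computed in $M_{2}$.
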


\begin{proof}
It suffices to prove this when $||x||\leq1$ because, for any strictly positive
real number $\rho$, $P$ is the set of projections \textbf{(}left\textbf{)
}annihilated by $\rho x$. First we show that $P$ is upward directed. Let $p,q$
be in $P$. Then
\[
x^{\ast}x(p+q)=0=(p+q)x^{\ast}x.
\]
Let $M_{1}$ be a m.a.s.a. containing $x^{\ast}x$ and $(p+q)$. By spectral
theory, the sequence $((\frac{p+q}{2})^{1/n})(n=1,2...)$ is monotone
increasing with supremum $e$ in $M_{1}$. Furthermore $e$ is a projection and
$x^{\ast}xe=0$. So $e\in P$. Also
\[
e\geq\frac{1}{2}(p+q).
\]
Arguing as in Lemma 1.2(i), it follows that $e\geq p$ and $e\geq q$.

Now we show that $P$ has a maximal element. Let $C$ be an increasing chain in
$P$. Then $C\cup\{x^{\ast}x\}$ is contained in some m.a.s.a. $M_{2}$. Then,
arguing as before,
\[
(\left(  x^{\ast}x\right)  ^{1/n})(n=1,2...)
\]
is a monotone increasing sequence with a supremum $p$ in $M_{2}$, where $p$ is
a projection and $pc=0$ for each $c\in C$. Also
\[
p\geq x^{\ast}x\geq0.
\]
So $(1-p)x^{\ast}x(1-p)=0$. Hence $x(1-p)=0$. So $1-p\in P$.

For any $c\in C$, $(1-p)c=c$. So $C$ has an upper bound, $1-p$, in $P$. It now
follows from Zorn's Lemma that $P$ has a maximal element $f$. Since $P$ is
upward directed it follows that $f$ is larger than every other projection in
$P$.
\end{proof}

\begin{theorem}
Let $A$ be a unital C*-algebra such that each m.a.s.a. is monotone $\sigma
-$complete. Then $A$ is a Rickart C*-algebra.
\end{theorem}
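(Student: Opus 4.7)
The plan is to combine Lemma 2.4 with a spectral-projection argument. Fix $x \in A$; by Lemma 2.4 there is a largest $f \in Proj(A)$ with $xf = 0$. I aim to show $\{z \in A : xz = 0\} = fA$, which is precisely the Rickart condition of Definition 2.1. The inclusion $fA \subseteq \{z : xz = 0\}$ is immediate from $xf = 0$. For the reverse inclusion, fix $z$ with $xz = 0$ and set $w = (1-f)z$; it suffices to show $w = 0$, for then $z = fz \in fA$.

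Since $xf = 0$, one has $xw = xz - xfz = 0$, hence $x\, ww^*\, x^* = (xw)(xw)^* = 0$. A short computation shows that $x^*x$, $1-f$ and $ww^*$ pairwise commute: $xf = 0$ gives $x^*xf = 0 = fx^*x$; $xww^* = 0$ gives $x^*x\, ww^* = 0 = ww^*\, x^*x$; and $(1-f)$ commutes with $ww^* = (1-f)zz^*(1-f)$ directly. Embed the commutative $*$-subalgebra they generate in a m.a.s.a.\ $M \subseteq A$; by hypothesis, $M$ is monotone $\sigma$-complete.

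The heart of the argument uses spectral theory in $M$. For each $n \geq 1$, build the spectral projection $p_n = \chi_{(1/n,\infty)}(ww^*) \in M$ as the supremum of the monotone sequence of continuous functions $h_k(ww^*)$ with $h_k(t) = \min(1, k(t-1/n)^+)$; monotone $\sigma$-completeness of $M$ guarantees the supremum exists as a projection. Since $h_k(t) \leq nt$ for $t \geq 0$, passing to the sup gives $p_n \leq n\, ww^*$. Therefore $x p_n x^* \leq n\, xww^*x^* = 0$, and being positive, $x p_n x^* = 0$; by the $C^*$-identity $xp_n = 0$, and so by the maximality of $f$, $p_n \leq f$. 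On the other hand each $h_k$ vanishes at $0$, so $h_k(ww^*)$ lies in the hereditary subalgebra $(1-f)A(1-f)$, whence $h_k(ww^*) \leq 1-f$; taking the sup in $M$ yields $p_n \leq 1-f$.

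The two bounds $p_n \leq f$ and $p_n \leq 1-f$ force $p_n = 0$, since a projection majorised by two orthogonal projections must vanish (Lemma 1.2). Hence $p_n = 0$ for every $n$, and spectral theory in $M$ then forces $ww^* \leq (1/n)\cdot 1$ for every $n$, so $ww^* = 0$ and $w = 0$, as required. The main technical obstacle is the construction of the spectral projections $p_n$ inside $A$ at all; monotone $\sigma$-completeness of the m.a.s.a.\ $M$ is precisely what makes this possible, and once $p_n$ is in hand, the pleasing orthogonal double-bound $p_n \leq f$, $p_n \leq 1-f$ closes the argument.
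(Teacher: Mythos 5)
Your strategy is structurally the same as the paper's: use Lemma 2.4 to get the largest projection $f$ annihilated on the right by $x$, pass to a m.a.s.a.\ containing the relevant commuting positive elements, use monotone $\sigma$-completeness there to manufacture a spectral projection, and then squeeze against $f$. The paper does this more economically: for $z$ with $xz=0$ it forms the single support projection $q=\sup_n (zz^{\ast})^{1/n}$ of $zz^{\ast}$ in a m.a.s.a.\ containing $x^{\ast}x$ and $zz^{\ast}$, checks $x^{\ast}xq=0$ so that $q\in P$ and hence $q\leq f$, and then $f\geq q\geq zz^{\ast}$ gives $(1-f)zz^{\ast}(1-f)=0$ at once. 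Your detour through $w=(1-f)z$ and the family $p_n=\chi_{(1/n,\infty)}(ww^{\ast})$ reaches the same conclusion, and the orthogonal double bound $p_n\leq f$, $p_n\leq 1-f$ is a pleasant finish, but it buys nothing over the single support projection and costs you an extra layer of spectral calculus.

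That extra layer is where the one genuine gap sits. You assert that monotone $\sigma$-completeness of $M$ "guarantees the supremum exists as a projection". Existence of $s=\sup_k h_k(ww^{\ast})$ in $M_{sa}$ is guaranteed, but idempotence of $s$ requires proof, and the argument the paper uses for $\sup_n a^{1/n}$ (Lemma 1.3(ii)) does not transfer to your cutoffs: there one observes that $a^{1/n}a^{1/m}=a^{1/n+1/m}$ runs through a cofinal subfamily, so $e^2$ is again an upper bound and $e^2\geq e$; for $h_k(t)=\min(1,k(t-1/n)^+)$ no product $h_jh_k$ dominates any $h_m$ near $t=1/n$, and the least upper bound of an increasing sequence in a monotone $\sigma$-complete $C(X)$ is \emph{not} the pointwise supremum, so $s^2\geq s$ is not automatic. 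The claim is true (e.g.\ via a Baire category argument on the spectrum of $M$), but the cheapest repair inside your own framework is to replace the $h_k$ by the roots of a single cutoff: put $g(t)=(t-1/n)^{+}$ and $p_n=\sup_m g(ww^{\ast})^{1/m}$. Then the paper's cofinality argument shows $p_n$ is a projection; the bound $g(t)^{1/m}\leq nt$ on $[0,1]$ still yields $p_n\leq n\,ww^{\ast}$ and hence $xp_n=0$; and $g(ww^{\ast})^{1/m}\in(1-f)A(1-f)$ still yields $p_n\leq 1-f$, after which your argument closes as written. Better still, drop the levels $1/n$ entirely and use the support projection of $zz^{\ast}$ itself, which is exactly the paper's device. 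Everything else in your write-up (the commutativity checks, $xp_n=0$ via the C*-identity, $p_n\leq f$ from maximality, and $p_n\leq f$ together with $p_n\leq 1-f$ forcing $p_n=0$) is correct.
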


\begin{proof}
Let $x\in A$ and let $K=\{z\in A:xz=0\}$. Let $P$ be the set of all
projections in $K$. By Lemma 2.4, $P$ has a largest element $f$.

Fix $z\in K$. We shall show that $z=fz$. It suffices to prove this when
$||z||\leq1$. Since $x^{\ast}xzz^{\ast}=0$, it follows that $x^{\ast}x$ and
$zz^{\ast}$ are contained in some m.a.s.a. $M_{3}$. The monotone increasing
sequence $((zz^{\ast})^{1/n})(n=1,2...)$ has supremum $q$ in $M_{3}$, where
$q$ is a projection. Also $x^{\ast}xq=0$ and $q\geq zz^{\ast}$. By Lemma 2.3,
$q\in P$. So
\[
f\geq q\geq zz^{\ast}\geq0.
\]
Then
\[
0=(1-f)f(1-f)\geq(1-f)zz^{\ast}(1-f)\geq0\text{.}%
\]
Since $||(1-f)z||^{2}=||(1-f)zz^{\ast}(1-f)||$, it follows that $z=fz$. So
$K\subset fA$. Since $f\in K$ we also have $fA\subset K$. Thus $K=fA$.
\end{proof}

\begin{corollary}
Let $A$ be a Rickart C*-algebra and let $x\in A$. There is a smallest
projection $q$ such that $x=xq$. Furthermore $xz=0$ if, and only if, $qz=0$.
\end{corollary}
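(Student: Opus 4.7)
My plan is to derive the corollary directly from Definition 2.1 applied to the element $x$ itself, using Lemma 1.1 to translate an algebraic identity into the order on projections.

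First, applying Definition 2.1 to $x$ yields a projection $p \in A$ with $\{z \in A : xz = 0\} = pA$. Taking $z = p \in pA$ gives $xp = 0$, so $x(1-p) = x$. I would therefore propose $q := 1-p$ as the candidate for the smallest projection with $xq = x$.

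For minimality, let $q'$ be any projection satisfying $xq' = x$. Then $x(1-q') = 0$, so $1-q' \in pA$; that is, $1-q' = pa$ for some $a \in A$. Left-multiplication by $p$ and the idempotence $p^{2}=p$ give $p(1-q') = pa = 1-q'$. By Lemma 1.1 this is exactly the statement $1-q' \le p$, equivalently $q \le q'$. So $q$ is the smallest projection with $xq = x$.

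For the "furthermore" clause I would simply unwind the defining equivalence: $xz = 0$ iff $z \in pA$ iff $z = pz$ (using $p^{2}=p$ for the non-trivial direction) iff $(1-p)z = 0$ iff $qz = 0$. I do not anticipate a serious obstacle here; the only subtle point is the equivalence $z \in pA \Leftrightarrow z = pz$, which needs both $p \cdot z \in pA$ and $p(pa) = pa$. The whole argument is essentially a repackaging of Definition 2.1 through the change of variable $q = 1-p$, with Lemma 1.1 converting the projection identity into the desired order inequality.
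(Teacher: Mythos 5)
Your proof is correct, and it takes a genuinely more direct route than the paper's. You argue straight from Definition 2.1: the Rickart projection $p$ for $x$ gives $\{z\in A: xz=0\}=pA$, you set $q=1-p$ (tacitly using Lemma 2.2 for the existence of the unit), and you get minimality from the elementary order lemma on projections (Lemma 1.2 in the paper's numbering, not 1.1): $xq'=x$ forces $1-q'\in pA$, hence $1-q'=p(1-q')$, hence $1-q'\leq p$, i.e.\ $q\leq q'$; the ``furthermore'' clause is just the equivalence $z\in pA\Leftrightarrow pz=z\Leftrightarrow qz=0$. The paper instead re-runs the machinery of Section 2: it begins by asserting that a Rickart algebra is unital with every m.a.s.a.\ monotone $\sigma$-complete, invokes Lemma 2.4 to produce the largest projection $f$ in $P=\{e\in Proj(A):xe=0\}$, takes $q=1-f$, and reads off minimality from the order reversal under $e\mapsto 1-e$ and the identity $K=fA$ established in Theorem 2.5. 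Your version buys something real: the paper's opening claim that every m.a.s.a.\ of a Rickart algebra is monotone $\sigma$-complete is the converse of Theorem 2.5 and is not established in the paper until Section 3 (and even there is partly quoted from Berberian), whereas your argument needs nothing beyond the definition and Lemma 1.2, so it is self-contained and works verbatim for any Rickart C*-algebra. The two constructions yield the same projection $q$, as they must by the uniqueness of the annihilating right projection (cf.\ Lemma 3.4).
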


\begin{proof}
Since $A$ is Rickart, the algebra is unital and each m.a.s.a. is monotone
$\sigma-$complete. Let $P$ be as in Lemma 2.4. Let $Q=\{1-p:p\in P\}$. Then
$Q$ is the set of all projections $p$ for which $x=xp$. Since $f$ is the
largest projection in $P$, $1-f$ is the smallest projection in $Q$.
Furthermore, $xz=0$ if, and only if, $z=fz$. That is, if and only if
$(1-f)z=0$. So putting $q=1-f$ gives the required projection.
\end{proof}

\begin{corollary}
Let $A$ be a unital C*-algebra which is monotone $\sigma-$complete. Then $A$
is a Rickart C*-algebra.
\end{corollary}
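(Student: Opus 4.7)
The plan is to reduce the statement to Theorem 2.5 by verifying its hypothesis: if $A$ is a unital monotone $\sigma$-complete C*-algebra, then every m.a.s.a.\ in $A$ is monotone $\sigma$-complete. Once that is established, Theorem 2.5 immediately gives that $A$ is a Rickart C*-algebra.

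To carry this out, I would fix a m.a.s.a.\ $M$ in $A$ and a norm bounded, monotone increasing sequence $(a_n)$ in $M_{sa}$. Since $A$ itself is monotone $\sigma$-complete, there exists $a \in A_{sa}$ with $a_n \uparrow a$ in $A_{sa}$. The task is then to show that $a$ actually lies in $M$. This is exactly the situation handled by Lemma 1.6: take $B = A$, take $M$ to be the given m.a.s.a., and take $P = A_{sa}$. The set $A_{sa}$ is trivially invariant under conjugation by unitaries of $A$, and $\{a_n : n \in \mathbb{N}\} \subset P \cap M_{sa}$ has $a$ as its least upper bound in $P = A_{sa}$. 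Lemma 1.6 then yields $a \in M$.

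Once $a \in M$, it follows immediately that $a$ is the supremum of $(a_n)$ in $M_{sa}$: it is an upper bound in $M_{sa}$, and any other upper bound in $M_{sa}$ is also an upper bound in $A_{sa}$, hence is $\geq a$. So $M$ is monotone $\sigma$-complete. Since $M$ was an arbitrary m.a.s.a.\ of $A$, every m.a.s.a.\ of $A$ is monotone $\sigma$-complete, and Theorem 2.5 applies to give that $A$ is Rickart.

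I do not foresee a serious obstacle here: the heavy lifting has already been done in Theorem 2.5 and Lemma 1.6. The only subtle point worth double-checking is that $P = A_{sa}$ is a legitimate choice in Lemma 1.6 (which it is, since the lemma explicitly allows $P = B_{sa}$ in its statement), and that the supremum in $A_{sa}$ of the sequence in question is genuinely a \emph{least} upper bound in $A_{sa}$ (which is the definition of monotone $\sigma$-completeness for $A$). With those boxes ticked, the corollary is a direct assembly of previously proved results.
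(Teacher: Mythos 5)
Your argument is exactly the paper's: apply Lemma 1.6 with $P=A_{sa}$ to show the supremum in $A_{sa}$ of a bounded increasing sequence from a m.a.s.a.\ $M$ lies in $M$, conclude that every m.a.s.a.\ is monotone $\sigma$-complete, and invoke Theorem 2.5. The proof is correct and matches the paper's route step for step.
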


\begin{proof}
Let $M$ be any m.a.s.a. in $A$. Let $(a_{n})$ be a norm-bounded monotone
increasing sequence in $M$, with least upper bound $a$ in $A_{sa}$. By Lemma
1.6, $a\in M$. So $M$ is monotone $\sigma$-complete.\ By Theorem 2.5, $A$ is a
Rickart C*-algebra.
\end{proof}

\begin{example}
Let $B(%
\mathbb{R}
)$ be the C*-algebra of all bounded complex valued functions on $%
\mathbb{R}
$. Let $A$ be the subalgebra of all functions $f$ such that $\{x:f(x)\neq0\}$
is countable. Then $A$ is a monotone $\sigma$-complete $C^{\ast}$-algebra
without unit. So $A$ cannot be a Rickart $C^{\ast}$-algebra. But, since $A$ is
abelian, the only maximal abelian $\ast$-subalgebra is $A$, itself, which is
monotone $\sigma$-complete.
\end{example}

The above example shows that in Theorem 2.5 the hypothesis that $A$ is unital
is essential. However we shall tidy up some loose ends in the next section by
obtaining results for non-unital algebras .

\section{\textbf{Weakly Rickart C*-algebras}}

Our aim here is to show that each m.a.s.a. of a C*-algebra $A$ is monotone
$\sigma$-complete if, and only if, $A$ is a weakly Rickart C*-algebra. In
\cite{B} (see Section 4 Theorem 1) it is shown that a unital weakly Rickart
C*-algebra is a Rickart C*-algebra (and conversely). So the situation for
unital C*-algebras has already been dealt with in Section 2.\ So here we shall
suppose that $A$ is a C*-algebra with no unit. Let us adjoin a unit to form
$A^{1}$. Then $A$ is a maximal ideal of $A^{1}$ and every element of $A^{1}$
can be written, uniquely, as $x+\lambda1$ where $x\in A$ and $\lambda\in%
\mathbb{C}
$.

Since weakly Rickart C*-algebras may be slightly less familiar than Rickart
C*-algebras, we give a brief account of some elementary results we need. The
standard reference is \cite{B}.

\begin{definition}
\cite{B} Let $x$ be in a C*-algebra $A$. A projection $e\in A$ is an
\textit{annihilating right projection (}abbreviated as\textit{ ARP }according
to \cite{B}) for $x$ if\ $xe=x$ and, whenever $y\in A$ satisfies $xy=0$, then
$ey=0$.
\end{definition}

Since \cite{B} is the standard reference on Rickart C*-algebras, we use his
terminology. But "right support projection for $x$ " is an alternative name
(for ARP) which we find more intuitive.

\begin{definition}
A C*-algebra $A$ is \textit{weakly Rickart }if each $x\in A$ has an
annihilating right projection $e\in A$.
\end{definition}

\begin{lemma}
Let $M$ be any maximal abelian $\ast$-subalgebra of $A^{1}$. Then $M\cap A$ is
a maximal abelian $\ast$-subalgebra of $A$ and $M=M\cap A+\mathbb{C}1$.
Conversely, if $M_{0}$ is a maximal abelian $\ast$-subalgebra of $A$ and
$M=M_{0}+%
\mathbb{C}
1$ then $M$ is a maximal abelian $\ast$-subalgebra of $A^{1}$.
\end{lemma}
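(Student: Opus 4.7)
The plan is to exploit the unique decomposition $A^{1}=A+\mathbb{C}1$ and the fact that any maximal abelian $\ast$-subalgebra of a unital C*-algebra must contain the unit. The whole argument reduces to repeatedly invoking the two maximality conditions together with the observation that commutation with $\mathbb{C}1$ is automatic, so commutation with $M$ and commutation with $M\cap A$ amount to the same thing. I expect no serious obstacle; the challenge is keeping the two worlds $A$ and $A^{1}$ cleanly separated and checking that the candidate $M_{0}+\mathbb{C}1$ is genuinely a C*-subalgebra (which is automatic since $M_{0}$ has codimension one inside $M_{0}+\mathbb{C}1$ and is already norm-closed).

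For the first statement, I would begin by noting that $1\in M$ because $M$ is a m.a.s.a.\ of the unital algebra $A^{1}$ and $1$ commutes with everything. Given any $m\in M$, write uniquely $m=x+\lambda1$ with $x\in A$ and $\lambda\in\mathbb{C}$; then $x=m-\lambda1\in M\cap A$, which gives $M=M\cap A+\mathbb{C}1$. Clearly $M\cap A$ is an abelian $\ast$-subalgebra of $A$. To establish its maximality, take $y\in A$ commuting with every element of $M\cap A$. For arbitrary $m=x+\lambda 1\in M$ with $x\in M\cap A$ we have $ym=yx+\lambda y=xy+\lambda y=my$, so $y$ commutes with all of $M$; by maximality of $M$ inside $A^{1}$, we conclude $y\in M$, and hence $y\in M\cap A$.

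For the converse, let $M_{0}$ be a m.a.s.a.\ of $A$ and set $M=M_{0}+\mathbb{C}1$. It is immediate that $M$ is an abelian self-adjoint subalgebra of $A^{1}$, and it is norm-closed since $M_{0}$ is norm-closed in $A$ and we are only adjoining the one-dimensional subspace $\mathbb{C}1$. For maximality, suppose $z\in A^{1}$ commutes with every element of $M$; write uniquely $z=y+\mu 1$ with $y\in A$ and $\mu\in\mathbb{C}$. For each $x\in M_{0}$ the element $x$ lies in $M$, and $zx=xz$ together with the fact that $\mu 1$ commutes with $x$ gives $yx=xy$. Thus $y\in A$ commutes with all of $M_{0}$, and maximality of $M_{0}$ inside $A$ forces $y\in M_{0}$, so $z\in M$.

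The only point that deserves a moment of care is that the proof makes essential use of the uniqueness of the decomposition $A^{1}=A\oplus\mathbb{C}1$, which holds precisely because $A$ is non-unital (so $1\notin A$); this is recorded in the paragraph preceding the definitions in Section 3. With that in hand, the two constructions $M\mapsto M\cap A$ and $M_{0}\mapsto M_{0}+\mathbb{C}1$ are mutually inverse, providing the announced bijection between the m.a.s.a.'s of $A^{1}$ and those of $A$.
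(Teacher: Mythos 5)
Your proof is correct and follows essentially the same route as the paper's: decompose each element of $A^{1}$ uniquely as $x+\lambda 1$, observe that commutation with $M$ and with $M\cap A$ are equivalent, and invoke maximality in each direction. You are slightly more explicit about $1\in M$ and the norm-closedness of $M_{0}+\mathbb{C}1$, but these are refinements of detail, not a different argument.
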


\begin{proof}
Let $x\in A$ such that $x$ commutes with each element of $M\cap A$. Take any
$y\in M$. Since $y=a+\lambda1$ for some $a\in A$ and $\lambda\in\mathbb{C}$,
we have $a=y-\lambda1\in M\cap A$ and so $xa=ax$. Hence $yx=xy$. So $x$
commutes with every element of $M$. Since $M$ is a m.a.s.a. in $A^{1}$ it
follows that $x\in M\cap A$. So $A\cap M$ is a maximal abelian $\ast
$-subalgebra of $A$. Clearly $M=A\cap M+\mathbb{C}1$.

Now suppose $M_{0}$ is a m.a.s.a. in $A$. Let $a\in A$ such that $a+\lambda1$
commutes with each element of $M$. Then $a$ commutes with each element of
$M_{0}$ and so $a\in M_{0}$. So $M$ is a m.a.s.a. in $A^{1}$.
\end{proof}

\begin{lemma}
Let $B$ be a C*-algebra (which may or may not be unital). Let $x\in B$ have an
ARP $p$. Then this projection is unique. Let $M_{0}$ be any m.a.s.a. of $B$
which contains $x$ then $p\in M_{0}$.
\end{lemma}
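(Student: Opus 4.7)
The plan is to handle the two assertions separately, with the second reducing to the first through a unitary-conjugation argument.

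For uniqueness, suppose $p$ and $p'$ are both ARPs for $x$. Then $xp = x = xp'$, so $x(p-p') = 0$ with $p - p' \in B$. Applying the annihilating property of $p$ to $y := p - p'$ yields $p(p-p') = 0$, i.e.\ $p = pp'$. Taking adjoints (projections are self-adjoint) gives $p = p'p$. The symmetric argument using the ARP property of $p'$ produces $p' = p'p$, whence $p = p'p = p'$.

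For the containment $p \in M_0$, the strategy is to show that $p$ commutes with every element of $M_0$ and then to invoke the maximality of $M_0$. Since $B$ may be non-unital, I would first pass to the unitization $B^1$: by the previous lemma, $M := M_0 + \mathbb{C}1$ is a m.a.s.a.\ of $B^1$ (and in the unital case $M_0$ already contains the identity, so one may take $M = M_0$). As $M$ is a unital commutative C*-algebra, every element of $M$ is a linear combination of at most four of its unitaries, so it is enough to verify $up = pu$ for each unitary $u \in M$. Given such a $u$, I would introduce the candidate $q := upu^*$. This lies in $B$ because $B$ is an ideal of $B^1$ and $p \in B$, and $q$ is plainly a projection. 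The relation $uxu^* = x$ (from $x \in M_0 \subset M$ and $M$ abelian) gives $xq = uxpu^* = uxu^* = x$, and for any $y \in B$ with $xy = 0$ one has $u^*y \in B$ and $x(u^*y) = u^*(xy) = 0$, so the ARP property of $p$ forces $p(u^*y) = 0$ and hence $qy = u(pu^*y) = 0$. Thus $q$ is an ARP for $x$, and uniqueness gives $q = p$, so $u$ commutes with $p$.

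The main obstacle is essentially bookkeeping in the non-unital case: one must verify that $q = upu^*$ and the auxiliary element $u^*y$ genuinely live in $B$ rather than merely in $B^1$, and that the ARP defining conditions are checked against elements $y \in B$, not $y \in B^1$. Both points are resolved cleanly by the ideal property $B \subset B^1$ together with the previous lemma that identifies the m.a.s.a.s of $B^1$ with those of $B$ via adjunction of $\mathbb{C}1$. Beyond that, the argument is just the unitary-conjugation technique of Lemma 1.6 transported to the weakly Rickart setting.
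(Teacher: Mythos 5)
Your proof is correct and follows essentially the same route as the paper's: uniqueness via $x(p-p')=0$ and the annihilating property applied symmetrically, then the unitary-conjugation argument in $M=M_0+\mathbb{C}1$ showing $upu^{\ast}$ is again an ARP, forcing $upu^{\ast}=p$ and hence $p\in M_0$ by maximality. Your explicit checks that $upu^{\ast}$ and $u^{\ast}y$ lie in $B$ (via the ideal property) are details the paper leaves implicit.
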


\begin{proof}
Let $f$ be an ARP of $x$. Then $x(p-f)=0$. So $p(p-f)=0$. Then $p=pf$. On
taking adjoints, $p=fp$. Similarly, $f=fp$. So $p=f$. Let $u$ be a unitary in
$M=M_{0}+%
\mathbb{C}
$. So
\[
xupu^{\ast}=uxpu^{\ast}=uxu^{\ast}=x.
\]
Suppose $xy=0$. Then $xu^{\ast}y=u^{\ast}xy=0$. So $pu^{\ast}y=0$. Hence
$upu^{\ast}$ is an ARP for $x$. So $p=upu^{\ast}$. So $p$ commutes with each
unitary in $M$ and hence with each element of $M$. But $M$ is a m.a.s.a. in
$B^{1}$. \ So $p\in M$. Since $p\in B$, it follows that $p\in B\cap M=M_{0}$.
\end{proof}

\begin{lemma}
Let $A$ be a weakly Rickart $C^{\ast}$-algebra. Then each m.a.s.a. in $A$ is
monotone $\sigma$-complete.
\end{lemma}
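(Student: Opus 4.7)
The strategy is to reduce the problem to a unital corner of $A$, where the ARP machinery of Lemma~3.4 supplies the spectral projections needed to build suprema.

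Let $M_0$ be a m.a.s.a.\ of $A$ and $(a_n)$ a norm-bounded monotone increasing sequence in $(M_0)_{sa}$. Replacing $a_n$ by $a_n - a_1$ and rescaling, I assume $0 \leq a_n \leq 1$. Set $b = \sum_{n \geq 1} 2^{-n} a_n \in (M_0)_+$. Since $A$ is weakly Rickart, $b$ has an ARP $e$, and Lemma~3.4 puts $e \in M_0$. From $b(1-e)=0$ and $0 \leq 2^{-n} a_n \leq b$ one gets $(1-e)a_n(1-e)=0$, and the $\|zz^*\|=\|z\|^2$ argument of Lemma~2.3 then yields $a_n(1-e)=0$, so each $a_n$ lies in the unital corner $eAe$ and is dominated by $e$. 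This corner is weakly Rickart (the ARP $f \in A$ of any $x \in eAe$ satisfies $xe=x$, so $f \leq e$ by minimality, whence $f \in eAe$) and unital, so by the Berberian theorem quoted at the start of Section~3 it is Rickart. Further, $eM_0 = M_0 \cap eAe$ is a m.a.s.a.\ in $eAe$: for $y \in eAe$ commuting with every $me \in eM_0$, using $ey=ye=y$ and $em=me$, the identity $y(me)=(me)y$ rearranges to $yme=my$, while $y(m-me)=(y-ye)m=0$, giving $ym=my$ for all $m \in M_0$; maximality of $M_0$ then forces $y \in eM_0$.

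The problem has reduced to constructing $\sup_n a_n$ in the m.a.s.a.\ $eM_0$ of the unital Rickart algebra $eAe$. The clean route is to cite the standard result of \cite{B} that every m.a.s.a.\ of a unital Rickart C*-algebra is monotone $\sigma$-complete, yielding a supremum $a \in eM_0$ directly. A self-contained alternative uses spectral projections built from ARPs: for each $\lambda \in \mathbb{Q} \cap (0,1)$ and each $n$, let $E_n^\lambda$ be the ARP of $(a_n - \lambda e)_+$ in $eAe$; by Lemma~3.4, $E_n^\lambda \in eM_0$, and it is increasing in $n$. The technical core is the identity that, for a commuting increasing sequence of projections $(f_n)$ in a m.a.s.a., the ARP $q$ of $\sum 2^{-n} f_n$ equals $\sup_n f_n$: commutativity (from Lemma~3.4) makes each $f_n(e-q)$ a projection, so $\sum 2^{-n} f_n(e-q)=0$ forces $f_n \leq q$, while minimality of the ARP supplies the universal property. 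Setting $E^\lambda = \sup_n E_n^\lambda \in eM_0$ gives a decreasing spectral family, and the Riemann--Stieltjes integral $a = \int_0^1 \lambda\, d(e - E^\lambda)$, converging in norm inside $C^*(\{E^\lambda\}) \subset eM_0$, is the required supremum of $(a_n)$ in $eM_0$.

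Finally, this lifts to $M_0$: any upper bound $x \in (M_0)_{sa}$ of $(a_n)$ is positive (since $a_1 \geq 0$) and commutes with $e$, so $xe \geq a$ in $eM_0$, while $x(1-e) = (1-e)x(1-e) \geq 0$ gives $x \geq xe \geq a$ in $M_0$. The main obstacle is establishing monotone $\sigma$-completeness of $eM_0$ inside $eAe$; either a citation to \cite{B} or the direct spectral construction above carries the technical weight of the argument.
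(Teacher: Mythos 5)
Your proof is correct, but it takes a genuinely different route: you localize to a corner where the paper globalizes to the unitization. The paper's own argument adjoins a unit: $A^{1}$ is a Rickart C*-algebra by \cite{B}, so the m.a.s.a. $M=M_{0}+\mathbb{C}1$ is monotone $\sigma$-complete; the supremum $a$ of $(a_{n})$ is taken in $M$, and the ARPs $e_{n}$ of the $a_{n}$ together with the ARP $p$ of $\sum 2^{-n}e_{n}$ (all lying in $M_{0}$ by Lemma 3.4) give $0\leq a\leq p\in M_{0}$, whence $a\in M_{0}$ because $M_{0}$ is an ideal of $M$. You instead compress into the corner $eAe$ cut out by the ARP of $\sum 2^{-n}a_{n}$, verify that $eAe$ is unital weakly Rickart (hence Rickart) and that $eM_{0}$ is a m.a.s.a. there, take the supremum in $eM_{0}$, and push it back up to $M_{0}$ via $x\geq xe\geq a$; note the two transfer-of-supremum steps point in opposite directions (the paper restricts a supremum from $M$ down to $M_{0}$, you extend one from $eM_{0}$ up to $M_{0}$), and both are handled correctly. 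Both routes lean on the same two external inputs, namely Lemma 3.4 and the fact from \cite{B} that each m.a.s.a. of a unital Rickart C*-algebra is monotone $\sigma$-complete (the paper applies it to $A^{1}$, you to $eAe$), so your citation is no heavier than the paper's. What your version buys is that it avoids Lemma 3.3 and the theorem that the unitization of a weakly Rickart algebra is Rickart, at the price of the two corner verifications, which you carry out in adequate detail. The self-contained spectral-family alternative you sketch is superfluous once the citation is in place, and it is the only part of your write-up left at sketch level (the convergence of the spectral integral and the verification that it is the supremum would need to be written out).
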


\begin{proof}
First we observe that $A^{1}$ is a Rickart $C^{\ast}$-algebra (see [B]). So
each m.a.s.a. in $A^{1}$ is monotone $\sigma$-complete. Let $M_{0}$ be a
m.a.s.a. in $A$. Put $M=M_{0}+%
\mathbb{C}
1$. Then by Lemma 3.3 $M$ is a m.a.s.a. in $A^{1}$. So $M$ is monotone
$\sigma-$complete. Let $(a_{n})$ be a norm bounded increasing sequence in
$M_{0}$. Without loss of generality we may suppose that each $a_{n}$ is
positive and norm bounded by $1$. Since $M$ is monotone $\sigma$-complete,
there exists $a\in M_{sa}$ such that $a_{n}\uparrow a$ in $M$. We shall show
that $a\in M_{0}$. Let $e_{n}$ be the ARP of $a_{n}$ in $A$ for each $n$, that
is, $a_{n}e_{n}=a_{n}$ and $e_{n}y=0$ when $a_{n}y=0$. By Lemma 3.4, $e_{n}\in
M_{0}$. Let $x=\sum_{n\geq1}\frac{1}{2^{n}}e_{n}$. Then $x\in M_{0}$ Let $p$
be the ARP of $x$ in $A$. Then $p\in M_{0}$ by Lemma 3.4. Then $x=xp\leq
||x||p\leq p$. So $\frac{1}{2^{n}}e_{n}\leq p$. Hence $(1-p)e_{n}=0$. It
follows that%
\[
a_{n}\leq e_{n}\leq p.
\]
So $p\geq a\geq0$. Since $p$ is in $M_{0}$ which is an ideal of $M$, it
follows that $a\in M_{0}$.
\end{proof}

\begin{lemma}
Let $A$ be a non-unital C*-algebra such that each maximal abelian $\ast
$-subalgebra of $A$ is monotone $\sigma$-complete. Then $A^{1}$ is a Rickart C*-algebra.
\end{lemma}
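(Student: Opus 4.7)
The plan is to apply Theorem 2.5 to $A^{1}$, so it suffices to show that every maximal abelian $\ast$-subalgebra of $A^{1}$ is monotone $\sigma$-complete. By Lemma 3.3 every such m.a.s.a. has the form $M = M_{0}+\mathbb{C}1$ with $M_{0} = M\cap A$ a m.a.s.a. of $A$, hence monotone $\sigma$-complete by hypothesis. The main obstacle is promoting this to monotone $\sigma$-completeness of $M$ itself: writing a bounded increasing $(f_{n})$ in $M_{sa}$ as $f_{n} = a_{n}+\lambda_{n}1$ with $\lambda_{n} = \phi(f_{n})$, where $\phi\colon M\to\mathbb{C}$ is the character killing $M_{0}$, and $a_{n} = f_{n}-\lambda_{n}1\in M_{0}$, the scalar sequence $(\lambda_{n})$ is bounded increasing with some limit $\lambda$, but the sequence $(a_{n})$ in $M_{0}$ need not be monotone.

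The key device is a single projection $p\in M_{0}$ supporting every $a_{n}$, in the sense that $pa_{n} = a_{n}$ for all $n$. For each $n$ the support projection $p_{n} = \sup_{k}(a_{n}^{\ast}a_{n})^{1/k}$ lies in $M_{0}$ by monotone $\sigma$-completeness; the join $q_{N} = p_{1}\vee\cdots\vee p_{N}$ is a projection in the abelian algebra $M_{0}$, and the bounded increasing sequence $(q_{N})$ has a supremum $p := \sup_{N}q_{N}\in M_{0}$. Since $0\leq p\leq 1$ in $M$ and $p\geq q_{N}$, one readily sees $pq_{N} = q_{N}$; this yields $p^{2}\leq p$ (from $p\leq 1$) and $p^{2}\geq pq_{N} = q_{N}$ for every $N$, so $p^{2}\geq p$ and $p$ is a projection. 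The inequality $p\geq p_{n}$ further gives $pp_{n} = p_{n}$ in the abelian setting, whence $pa_{n} = pp_{n}a_{n} = p_{n}a_{n} = a_{n}$.

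With this $p$ in hand, I decompose $f_{n} = pf_{n} + (1-p)f_{n}$. The first piece $pf_{n} = a_{n}+\lambda_{n}p$ lies in $M_{0}$ and is bounded increasing, hence has a supremum $F\in M_{0}$ by hypothesis. The second simplifies to $(1-p)f_{n} = \lambda_{n}(1-p)$, because $(1-p)a_{n} = a_{n}-pa_{n} = 0$, with obvious supremum $\lambda(1-p)\in M$. I would then verify that $F+\lambda(1-p)$ is the supremum of $(f_{n})$ in $M$: it is clearly an upper bound by addition; and if $u\in M_{sa}$ satisfies $u\geq f_{n}$ for all $n$, then $pu$ lies in $M_{0}$ (as $M_{0}$ is an ideal in $M$) and dominates each $pf_{n}$, so $pu\geq F$, while $(1-p)u\geq\lambda_{n}(1-p)$ for all $n$ forces $(1-p)u\geq\lambda(1-p)$ by evaluating at each character of the abelian algebra $M$. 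Adding these gives $u\geq F+\lambda(1-p)$, so $M$ is monotone $\sigma$-complete, and Theorem 2.5 applied to $A^{1}$ completes the proof.
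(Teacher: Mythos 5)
Your argument is correct and follows essentially the same route as the paper: decompose each term of the sequence using a single projection $p\in M\cap A$ that supports all the $A$-components, take the supremum of the $M_0$-part by hypothesis and of the scalar part on $1-p$ separately, and verify the sum is the supremum in $M$. The only difference is that you spell out the construction of $p$ via support projections and their joins (modulo the harmless omission of normalising $\|a_n\|\leq 1$ before invoking the increasing sequence $((a_n^{\ast}a_n)^{1/k})_k$), a step the paper asserts without detail.
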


\begin{proof}
Let $M$ be any maximal abelian $\ast$-subalgebra of $A^{1}$. We shall show
that $M$ is monotone $\sigma$-complete.

By Lemma 2.7, $M\cap A$ is a maximal abelian $\ast$-subalgebra of $A$ and so,
it is monotone $\sigma$-complete. We claim that $M=(M\cap A)+\mathbb{C}1$, is
also monotone $\sigma$-complete.

Let $(a_{n})$ be any norm bounded monotone increasing sequence in $M_{sa}$.
Then, for each $n$, we have $a_{n}=b_{n}+\lambda_{n}1$ with $b_{n}\in M\cap A$
and $\lambda_{n}\in\mathbb{R}$. Since $A$ is a closed two-sided ideal of
$A_{1}$, $(\lambda_{n})$ is a bounded increasing sequence in $\mathbb{R}$.
Hence there exists $\lambda_{0}\in\mathbb{R}$ such that $\lambda_{n}%
\uparrow\lambda_{0}$. Since $M\cap A$ is monotone $\sigma$-complete, there
exists a projection $p$ in $M\cap A$ such that $b_{n}p=pb_{n}=b_{n}$ for all
$n$. Then we have $pa_{n}=a_{n}p\in A\cap M$ for each $n$. Since $A\cap M$ is
monotone $\sigma$-complete and $(a_{n}p)$ is a norm bounded increasing
sequence in $(A\cap M)_{sa}$, there exists a $b\in(A\cap M)_{sa}$ such that
$a_{n}p\uparrow b$ in $(A\cap M)_{sa}$ with $bp=pb=b$.

Since $a_{n}(1-p)=\lambda_{n}(1-p)\uparrow\lambda_{0}(1-p)$ in $M_{sa}$, we
have $a_{n}\leq b+\lambda_{0}(1-p)=a(\in M)$ for all $n$. Take any $x\in
M_{sa}$ with $a_{n}\leq x$ for all $n$. Then we have $a_{n}p\leq xp$ for all
$n$ and $a_{n}(1-p)=\lambda_{n}(1-p)\leq x(1-p)$ for all $n$. So $ap=bp\leq
xp$ and $\lambda_{0}(1-p)\leq b(1-p)$. So we have $a\leq x$, that is,
$a_{n}\uparrow a$ in $M_{sa}$. So $M$ is monotone $\sigma$-complete. It now
follows from Theorem 2.5 that $A^{1}$ is a Rickart $C^{\ast}$-algebra.
\end{proof}

\bigskip

Is the converse of Lemma 3.6 true? The following commutative example shows
that it is false.

\begin{example}
Let $\ell^{\infty}$ be the monotone $\sigma-$complete $C^{\ast}$-algebra of
all bounded complex sequences over $\mathbb{N}$. (By Theorem 2.5,
$\ell^{\infty}$ is Rickart.) The spectrum $\beta\mathbb{N}$ of $\ell^{\infty}$
is the Stone-\v{C}ech compactification of $\mathbb{N}$. Let $\omega$ be in
$\beta%
\mathbb{N}
$ but not in $%
\mathbb{N}
$. Let $A=\{f\in\ell^{\infty}:f(\omega)=0\}$. Then $A$ is a non-unital
$C^{\ast}$-algebra which is also a maximal closed ideal of $\ell^{\infty}$ and
$A^{1}=\ell^{\infty}$. For each $n$, define $e_{n}\in\ell^{\infty}$ by
$e_{n}=\chi_{\{1,2,\cdots,n\}}$. Clearly $e_{n}\in Proj(A)$ and $(e_{n})$ is a
norm bounded increasing sequence in $A_{sa}$. Suppose $A$ is monotone
$\sigma-$complete. Then $(e_{n})$ has a least upper bound $e$ in $A$. Then
$e\in\ell^{\infty}$. Clearly $e(n)\geq1$ for each $n$. So $e\geq1$ which
implies that $e(\omega)\neq0$. This is a contradiction.
\end{example}

\begin{theorem}
Let $A$ be a non-unital $C^{\ast}$-algebra. Then $A$ is a weakly Rickart
$C^{\ast}$-algebra if, and only if, each maximal abelian $\ast$-subalgebra of
$A$ is monotone $\sigma$-complete.
\end{theorem}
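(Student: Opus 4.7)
The implication that each maximal abelian $\ast$-subalgebra of $A$ is monotone $\sigma$-complete whenever $A$ is weakly Rickart is already Lemma 3.5, so the task is the converse. Assume each m.a.s.a.\ of $A$ is monotone $\sigma$-complete. The plan is to leverage Lemma 3.6: the unitisation $A^{1}$ is then a Rickart $C^{\ast}$-algebra, so for each $x\in A$ Corollary 2.6 supplies a smallest projection $q\in A^{1}$ with $xq=x$, which automatically satisfies $xy=0\iff qy=0$ for all $y\in A^{1}$. If $q$ actually lies in $A$, then restricting this equivalence to $y\in A$ exhibits $q$ as an ARP of $x$ in $A$, and we are done. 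The whole problem thus reduces to showing $q\in A$.

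To analyse $q$, I write $q=c+\mu 1$ uniquely with $c\in A$ and $\mu\in\mathbb{C}$. Self-adjointness $q^{\ast}=q$ puts $\mu\in\mathbb{R}$ and $c=c^{\ast}$, and expanding $q^{2}=q$ gives
\[
c^{2}+(2\mu-1)c=(\mu-\mu^{2})1.
\]
The left-hand side lies in $A$, so $(\mu-\mu^{2})1\in A$; since $A$ is non-unital, $1\notin A$, forcing $\mu\in\{0,1\}$. The case $\mu=0$ is exactly $q\in A$, so everything hinges on excluding $\mu=1$, in which case $q=1-p$ for the projection $p:=-c\in A$.

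To rule out $\mu=1$, the plan is to construct, inside $A$ itself, a projection $e$ with $xe=x$ that is comparable to $q$, and then force a contradiction from the shape $q=1-p$. First, $xq=x$ gives $x^{\ast}xq=qx^{\ast}x=x^{\ast}x$, so $q$, and hence $p=1-q$, commutes with $x^{\ast}x$; I can therefore pick a m.a.s.a.\ $M_{0}$ of $A$ containing both $x^{\ast}x$ and $p$. By hypothesis $M_{0}$ is monotone $\sigma$-complete, and after rescaling so that $\|x\|\le 1$, the sequence $((x^{\ast}x)^{1/n})$ is increasing and norm bounded in $M_{0}$, so it has a supremum $e\in M_{0}$. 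Mimicking the manipulation of Lemma 2.4 and Theorem 2.5, both $e^{1/2}$ and $e^{2}$ are upper bounds for the same sequence, hence $e=e^{2}$ is a projection; and the relation $(1-e)(x^{\ast}x)^{1/n}(1-e)=0$ (commuting projections with $e\ge(x^{\ast}x)^{1/n}$) yields first $(x^{\ast}x)^{1/2}e=(x^{\ast}x)^{1/2}$ and then $x^{\ast}xe=x^{\ast}x$, which gives $xe=x$.

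Now $e\in A\subset A^{1}$ is a projection with $xe=x$, so by minimality $q\le e$ in $A^{1}$. Since $p\in M_{0}$, both $e$ and $q=1-p$ sit in the commutative algebra $M_{0}+\mathbb{C}1$, so they commute, and $q\le e$ forces $qe=q$, i.e.\ $(1-p)e=1-p$, which rearranges to $e+p-pe=1$. But $e+p-pe\in M_{0}\subset A$, while $1\notin A$, the desired contradiction. Hence $\mu=0$, $q\in A$, and $q$ is an ARP of $x$ in $A$, proving that $A$ is weakly Rickart. The step I expect to be most delicate is arranging that $q$ and the constructed $e$ share a common commutative envelope, which is why $M_{0}$ must be chosen \emph{after} $p$ has been identified, not before; without $p\in M_{0}$ the final arithmetic $e+p-pe=1$ does not take place inside $A$ and the argument collapses.
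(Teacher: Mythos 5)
Your proof is correct and follows essentially the same route as the paper: reduce to Lemma 3.6 and Corollary 2.6, build a projection $e\in A$ with $xe=x$ as the supremum of $((x^{\ast}x)^{1/n})$ in a monotone $\sigma$-complete m.a.s.a.\ of $A$, and use minimality of the Rickart projection to land it inside the ideal $A$. The only difference is cosmetic: the paper concludes directly from $q\le e$ that $q=qe\in A$ because $A$ is an ideal of $A^{1}$, which renders your decomposition $q=c+\mu 1$, the case analysis $\mu\in\{0,1\}$, the contradiction $1\in A$, and the care taken to place $p$ inside $M_{0}$ all unnecessary.
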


\begin{proof}
Lemma 3.5 gives the implication in one direction. So we now assume that each
m.a.s.a. in $A$ is monotone $\sigma-$complete and wish to prove that $A$ is
weakly Rickart. It suffices to consider $x\in A$ with $||x||\leq1$ and show
that $x$ has an annihilating right projection in $A$.

By Lemma 3.6, $A^{1}$ is a Rickart algebra. So for some projection $e\in
A^{1}$%
\[
\mathfrak{\{}z\in A^{1}:xz=0\mathfrak{\}}=(1-e)A^{1}.
\]
Thus $e$ is the ARP for $x$ in $A^{1}$. By Corollary 2.6, $e$ is the smallest
projection in $Q=\{q\in ProjA^{1}:x=xq\}$.

We have $x^{\ast}x(1-e)=0$. So there is a m.a.s.a. $M$ in $A^{1}$ which
contains $e$ and $x^{\ast}x$. By Lemma 3.3, $M\cap A$ is a m.a.s.a. in $A$ and
so monotone $\sigma-$complete. Then $((x^{\ast}x)^{1/n})$ is a monotone
increasing sequence in $M\cap A$ with supremum $q$ in $M\cap A$. By spectral
theory $q$ is a projection. Also $q\geq x^{\ast}x$. So%
\[
0=(1-q)q(1-q)\geq(1-q)x^{\ast}x(1-q)\geq0.
\]
Hence $x(1-q)=0$. So $q\in Q$. Thus $e\leq q$. So $e=eq$. Since $A$ is an
ideal and $q$ is in $A$, it now follows that $e$ is in $A$. So $x$\ has an
annihilating right projection in $A$. Hence $A$ is a weakly Rickart C*-algebra.
\end{proof}

\bigskip

It is a pleasure to thank Dr. A.J. Lindenhovius, whose perceptive questions
triggered this paper.

\begin{center}

\end{center}

\bigskip
\end{document}